\documentclass{article}

\usepackage{amsmath,amsthm,amssymb}
\usepackage{hyperref}
\usepackage{mathtools}
% \usepackage{pgfplots}
% \usepackage{subcaption}
% \usepackage{tikz}
% \usetikzlibrary{quotes,angles}
\usepackage{xcolor}

% Allow displaybreaks
\allowdisplaybreaks

% Setup pgfplots
% \pgfplotsset{trig format plots=rad, compat=1.16}

% Fancy colors
\definecolor{DarkPurple}{HTML}{381d2a}
\definecolor{QueenBlue}{HTML}{3e6990}
\definecolor{LaurelGreen}{HTML}{aabd8c}
\definecolor{DutchWhite}{HTML}{e9e3b4}
\definecolor{PinkOrange}{HTML}{f39b6d}

% Hypersetup
\hypersetup{
citebordercolor = LaurelGreen,
urlbordercolor = QueenBlue,
linkbordercolor = PinkOrange
}

% The theorem environment
\newtheorem{theorem}{Theorem}[section]
\newtheorem{lemma}[theorem]{Lemma}
\newtheorem{remark}[theorem]{Remark}

% Calligraphic letters
\newcommand{\calA}{\mathcal{A}}

\newcommand{\calC}{\mathcal{C}}

\newcommand{\calF}{\mathcal{F}}
\newcommand{\calG}{\mathcal{G}}

\newcommand{\calS}{\mathcal{S}}

% Blackboard letters
\newcommand{\bbC}{\mathbb{C}}

\newcommand{\bbN}{\mathbb{N}}

\newcommand{\bbR}{\mathbb{R}}
\newcommand{\bbZ}{\mathbb{Z}}

% Bold numbers

% Bold letters

% Bold Greek letters

% Roman letters

\newcommand{\rme}{\mathrm{e}}
\newcommand{\rmi}{\mathrm{i}}

% Fraktur letters

% Different left and right delimiters
\newcommand{\set}[2]{\left\{ #1 \,\middle|\, #2 \right\}}
\newcommand{\abs}[1]{\left\lvert #1 \right\rvert}
\newcommand{\norm}[1]{\left\lVert #1 \right\rVert}

% Assorted operators

\DeclareMathOperator{\Exists}{\exists}

\renewcommand{\Re}{\operatorname{Re}}

% Definitions for integration
\newcommand{\dd}{\,\mathrm{d}}

\title{Injectivity of sampled Gabor phase retrieval in spaces with general integrability conditions}
\author{Matthias Wellershoff\thanks{University of Maryland, Department of Mathematics, William E. Kirwan Hall, 4176 Campus Drive, College Park, MD 20742, \href{mailto:wellersm@umd.edu}{\texttt{wellersm@umd.edu}}}}
\date{\today}

\begin{document}
\maketitle

\begin{abstract}
    It was recently shown that functions in $L^4([-B,B])$ can be uniquely recovered up to a global phase factor from the absolute values of their Gabor transforms sampled on a rectangular lattice. We prove that this remains true if one replaces $L^4([-B,B])$ by $L^p([-B,B])$ with $p \in [1,\infty]$. To do so, we adapt the original proof by Grohs and Liehr and use a classical sampling result due to Beurling. Furthermore, we present a minor modification of a result of M{\"u}ntz--Sz{\'a}sz type by Zalik. Finally, we consider the implications of our results for more general function spaces obtained by applying the fractional Fourier transform to $L^p([-B,B])$ and for more general nonuniform sampling sets.

    \vspace{5pt}
    \noindent
    \textbf{Keywords}~Phase retrieval, Gabor transform, Sampling theory, Time-frequency analysis

    \vspace{5pt}
    \noindent
    \textbf{Mathematics Subject Classification (2010)}~94A12, 94A20
\end{abstract}

\section{Introduction}

In this paper, we consider the \emph{Gabor transform} of functions $f \in L^p(\bbR)$, $p \in [1,\infty]$, given by
\begin{equation*}
    \calG f (x,\omega) := 2^{1/4} \int_\bbR f(t) \rme^{-\pi(t-x)^2} \rme^{-2\pi\rmi t \omega} \dd t, \qquad (x,\omega) \in \bbR^2,
\end{equation*}
and try to understand if one can recover $f$ from measurements of the absolute value $\abs{\calG f}$ on discrete sets $S \subset \bbR^2$. This so-called sampled Gabor phase retrieval problem has recently been studied extensively \cite{alaifari2020phase,alaifari2021uniqueness,grohs2021foundational,grohs2023injectivity}. It is an elegant mathematical problem in the sense that it is rather easy to state while, at the same time, being less easy to solve. Moreover, it is connected to certain audio processing applications such as the phase vocoder \cite{flanagan1966phase,pruuvsa2017phase}.

A hallmark of all phase retrieval problems is that signals cannot be fully recovered from phaseless measurements. For the Gabor phase retrieval problem, we can see that the functions $f$ and $\rme^{\rmi \alpha} f$, where $\alpha \in \bbR$, generate the same measurements 
\begin{equation*}
    \abs{\calG (\rme^{\rmi \alpha} f)} = \abs{\rme^{\rmi \alpha} \calG f} = \abs{\calG f}.
\end{equation*}
Hence, we are not able to distinguish between $f$ and $\rme^{\rmi \alpha} f$ on the basis of their Gabor transform magnitude samples. We will therefore consider the equivalence relation $\sim$ on $L^p(\bbR)$ defined by 
\begin{equation}
    \label{eq:equivalence}
    f \sim g :\iff \Exists \alpha \in \bbR : f = \rme^{\rmi \alpha} g.
\end{equation}
With the help of this relation, we can introduce the phase retrieval operator $\calA : V/{\sim} \to [0,\infty)^{S}$, where $V$ is a subspace of $L^p(\bbR)$, by 
\begin{equation*}
    \calA(f)(x,\omega) := \abs{\calG f(x,\omega)}, \qquad (x,\omega) \in S,
\end{equation*}
for $f \in V/{\sim}$. The \emph{sampled Gabor phase retrieval problem} is the problem of inverting $\calA$ when $S \subset \bbR^2$ is discrete.

We note that it has long been known that one can invert $\calA$ for $V = L^2(\bbR)$ and $S = \bbR^2$. In applications, one does typically not have access to measurements of the Gabor transform magnitude on the entire time-frequency plane, however, and we thus believe that the sampled Gabor phase retrieval problem is a natural first step towards a better understanding of settings encountered in practice.

Relatively little was known about the inversion of $\calA$ for discrete sets $S$ until recently when a series of breakthroughs was presented in the papers \cite{alaifari2020phase,alaifari2021uniqueness,grohs2021foundational,grohs2023injectivity}. For the genesis of this paper, the work in \cite{grohs2023injectivity} was most important. The authors of that paper show that sampled Gabor phase retrieval is unique when $V = L^4([-B,B])$ and $S = \bbZ \times (4B)^{-1} \bbZ$. The assumption that signals lie in $L^4([-B,B])$ does not seem very natural, however, such that we are interested in extending the result to spaces with more general integrability conditions, and notably to $L^2([-B,B])$ as well as $L^1([-B,B])$. We will do so here and prove the following main result.

\begin{theorem}
    \label{thm:simplemaintheorem}
    Let $p \in [1,\infty]$, $B > 0$ and $b \in (0,\tfrac{1}{4B})$. Then, the following are equivalent for $f,g \in L^p([-B,B])$:
    \begin{enumerate}
        \item $f = \rme^{\rmi \alpha} g$ for some $\alpha \in \bbR$, 
        \item $\abs{\calG f} = \abs{\calG g}$ on $\bbN \times b \bbZ$.
    \end{enumerate}
\end{theorem}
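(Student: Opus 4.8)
The implication $(1)\Rightarrow(2)$ is immediate, since the factor $\rme^{\rmi\alpha}$ pulls out of $\calG$ and is annihilated by $\abs{\cdot}$, so all the content is in $(2)\Rightarrow(1)$. Because $[-B,B]$ has finite measure we have $L^p([-B,B]) \subseteq L^1([-B,B])$ for every $p \in [1,\infty]$, so the plan is to prove the implication once, for $f,g \in L^1([-B,B])$. The first move is to strip off the Gaussian: expanding $(t-x)^2$ and setting $z = x - \rmi\omega$ gives
\[
    \calG f(x,\omega) = 2^{1/4}\,\rme^{-\pi x^2}\,\Phi_f(z), \qquad \Phi_f(z) := \int_{-B}^{B} f(t)\,\rme^{-\pi t^2}\,\rme^{2\pi t z}\dd t,
\]
where $\Phi_f$ is entire of exponential type at most $2\pi B$ by Paley--Wiener. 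Since the Gaussian prefactor is common to $f$ and $g$ and never vanishes, the hypothesis becomes $\abs{\Phi_f(n - \rmi b m)} = \abs{\Phi_g(n - \rmi b m)}$ for all $n \in \bbN$, $m \in \bbZ$, and the target $\Phi_f = \rme^{\rmi\alpha}\Phi_g$ is equivalent to $f = \rme^{\rmi\alpha}g$.

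Next I would upgrade the discrete data on $\bbN \times b\bbZ$ to continuous data on the vertical lines $\Re z = n$. Fixing $n$, the map $\omega \mapsto \Phi_f(n - \rmi\omega)$ is the Fourier transform of $\phi_n(t) := f(t)\rme^{-\pi t^2}\rme^{2\pi t n}\mathbf{1}_{[-B,B]}(t) \in L^1$, so $\omega \mapsto \abs{\Phi_f(n-\rmi\omega)}^2$ is the Fourier transform of the autocorrelation of $\phi_n$, which is supported in $[-2B,2B]$. Hence $\abs{\Phi_f(n-\rmi\cdot)}^2$ is a bounded function bandlimited to $[-2B,2B]$, and since $b < \tfrac{1}{4B}$ the lattice $b\bbZ$ oversamples the corresponding Paley--Wiener class; \emph{Beurling's sampling theorem}, in its form valid for bounded bandlimited functions, then recovers $\abs{\Phi_f(n-\rmi\cdot)}$ from its values on $b\bbZ$. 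This is precisely the place where the $L^2$-based argument of Grohs and Liehr must be replaced, because for $p = 1$ the relevant function need not be square-integrable. The outcome is $\abs{\Phi_f(n-\rmi\omega)} = \abs{\Phi_g(n-\rmi\omega)}$ for \emph{all} $\omega \in \bbR$ and every $n \in \bbN$.

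With equality of magnitudes on every vertical line in hand, I would match the underlying autocorrelations: equality of $\abs{\Phi_f(n-\rmi\cdot)}$ and $\abs{\Phi_g(n-\rmi\cdot)}$ on all of $\bbR$ forces, for a.e.\ shift $u$,
\[
    \int \bigl(f(s+u)\overline{f(s)} - g(s+u)\overline{g(s)}\bigr)\,\rme^{-\pi(s+u)^2-\pi s^2}\,\rme^{4\pi s n}\dd s = 0 \qquad \text{for every } n \in \bbN.
\]
The substitution $v = \rme^{4\pi s}$ turns $\{\rme^{4\pi s n}\}_{n\in\bbN}$ into a system of powers on a compact interval, so the Müntz--Sz{\'a}sz/Zalik-type completeness result forces the bracketed integrand to vanish, i.e.\ $f(a)\overline{f(b)} = g(a)\overline{g(b)}$ for a.e.\ $(a,b)$. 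Picking a base point $b_0$ with $f(b_0)\neq 0$ (so that also $g(b_0)\neq 0$, since the diagonal already yields $\abs{f}=\abs{g}$) and dividing gives $f = \rme^{\rmi\alpha}g$ with $\rme^{\rmi\alpha} = \overline{g(b_0)}/\overline{f(b_0)}$, which is unimodular; the case $f \equiv 0$ is trivial.

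The conceptual heart, adapted from Grohs and Liehr, is the passage from magnitudes-on-lines to equal autocorrelations together with the completeness argument that collapses the family indexed by $n$. The step where extending beyond $L^4$ genuinely bites, however, is the sampling: for general $p \in [1,\infty]$ the functions $\abs{\Phi_f(n-\rmi\cdot)}^2$ sit in $L^\infty$ rather than a weighted $L^2$ space, and the autocorrelation identities above must be justified at the distributional level instead of as convergent integrals. I expect Beurling's theorem to be exactly the tool that absorbs this difficulty uniformly in $p$, with the remaining measure-theoretic bookkeeping being routine but the delicate point to get right.
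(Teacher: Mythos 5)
Your argument is correct and rests on the same three pillars as the paper --- Beurling sampling in the densely sampled direction, Müntz--Szász-type completeness in the sparse direction, and the classical ambiguity-function endgame --- but it takes a genuinely different route to them. The paper never proves Theorem~\ref{thm:simplemaintheorem} directly: it proves the bandlimited statement for $\mathrm{PW}_B^p$ with dense \emph{time} samples $X$ and sparse \emph{frequency} samples $\Omega$ (Theorem~\ref{thm:mainthm}), and then obtains the time-limited statement by rotating the time--frequency plane with the fractional Fourier transform (Lemma~\ref{lem:frftandgabor}, Theorem~\ref{thm:generalised_main_thm} with $\theta = 0$, $\Omega = \bbN$, $X = b\bbZ$). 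You work directly on the time-limited side via $\Phi_f$, so $\bbN$ indexes the vertical lines $\Re z = n$ and $b\bbZ$ is absorbed by Beurling; note the autocorrelation of $\phi_n$ lies honestly in $L^1([-2B,2B])$, so the $\mathrm{PW}_{2B}^1$ form of Beurling used in the paper suffices and no distributional justification is needed --- your caution there is unnecessary. Where the paper quotes Zalik's theorem on Gaussian translates, you inline its proof: the substitution $v = \rme^{4\pi s}$ reducing to powers is exactly how the paper proves Theorem~\ref{thm:zalik_full}. Your route is leaner for this specific theorem; the paper's detour buys the general Theorems~\ref{thm:mainthm}--\ref{thm:generalised_main_thm} (arbitrary uniqueness sets, arbitrary Zalik sets, rotated lattices). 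One small slip: for $p=1$ you cannot invoke ``the diagonal,'' since $f(a)\overline{f(b)} = g(a)\overline{g(b)}$ holds only off a null set of $(a,b)$ and the diagonal is null (and $\abs{f}^2$ need not even be integrable, so the autocorrelation at shift $u=0$ is uncontrolled); the standard repair, which is what the paper does, is to observe that $g(b_0) = 0$ would force $f(a)\overline{f(b_0)} = 0$ for a.e.\ $a$, hence $f = 0$ a.e., a contradiction, after which substituting $f = \tau g$ back into the identity yields $\abs{\tau} = 1$.
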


We observe that this theorem is almost optimal in view of the results presented in \cite{alaifari2020phase}: there, for any lattice $S \subset \bbR^2$ in the time-frequency plane, explicit examples $f,g \in L^2(\bbR)$ were constructed which do not agree up to global phase but which satisfy that
\begin{equation*}
    \abs{\calG f (x,\omega)} = \abs{\calG g (x,\omega)}, \qquad (x,\omega) \in S.
\end{equation*} 
In particular, it is necessary to restrict the Gabor phase retrieval problem to a proper subspace $V$ of $L^2(\bbR)$ in order to obtain a uniqueness result from samples. It may not surprise the reader that one may further generalise Theorem \ref{thm:simplemaintheorem} in multiple ways to include more general function spaces obtained by taking fractional Fourier transforms of elements in $L^p([-B,B])$ or more general nonuniform sampling sets. Both of these generalisation have already been suggested in \cite{grohs2023injectivity} and we adapt them here. We remark that during the review process for this paper, other manuscripts addressing aspects of sampled short-time Fourier transform phase retrieval have emerged, such as \cite{alaifari2022connection, grohs2022completeness, iwen2023phase, wellershoff2023sampling}.

\paragraph{Outline} In Section~\ref{sec:basicnotions}, we introduce some basic concepts needed for the further understanding of this paper. Specifically, we introduce the fractional Fourier transform and the Paley--Wiener spaces along with some of their most relevant properties. Additionally, we provide two core insights: the first of those being that the Gabor transform of a bandlimited function is bandlimited in the first argument (cf.~Lemma~\ref{lem:acha_masterlockI}) and the second of those being a result of M{\"u}ntz--Sz{\'a}sz type by Zalik (cf.~Theorem~\ref{thm:zalik_full}).

In Section~\ref{sec:mainresults}, we prove our main result, which states that for sufficiently dense uniformly discrete sets $X$ and certain countable sets $\Omega$, functions in $\mathrm{PW}_B^p$, $p \in [1,\infty]$, are uniquely determined (up to global phase) from their Gabor magnitudes on $X \times \Omega$. We finally extend this result by applying the fractional Fourier transform to rotate the time-frequency plane.

\paragraph{Notation} We use the convention 
\begin{equation*}
    \calF f (\xi) = \int_{\bbR^d} f(t) \rme^{-2\pi\rmi (t, \xi)} \dd t, \qquad \xi \in \bbR^d,
\end{equation*}
for the Fourier transform on the Schwartz space $\calS(\bbR^d)$. It is well-known that the Fourier transform can be extended to the space of tempered distributions $\calS'(\bbR^d)$ and thereby to all $L^p(\bbR^d)$, $p \in [1,\infty]$. The inverse of the Fourier transform is given by 
\begin{equation*}
    \calF^{-1} g (t) = \int_{\bbR^d} g(\xi) \rme^{2\pi\rmi (\xi, t)} \dd \xi, \qquad t \in \bbR^d,
\end{equation*}
for $g \in \calS(\bbR^d)$ and can likewise be extended to $\calS'(\bbR^d)$. We use the translation operators $\{\operatorname{T}_x\}_{x \in \bbR}$ given by 
\begin{equation*}
    \operatorname{T}_x f (t) := f(t-x), \qquad t \in \bbR,
\end{equation*} 
for $x \in \bbR$, as well as the modulation operators $\{\operatorname{M}_\omega\}_{\omega \in \bbR}$ given by 
\begin{equation*}
    \operatorname{M}_\omega f (t) := \rme^{2\pi\rmi t \omega} f(t), \qquad t \in \bbR,
\end{equation*} 
for $\omega \in \bbR$. We denote the normalised Gaussian by $\phi(t) = 2^{1/4} \rme^{-\pi t^2}$ where $t \in \bbR$. The cardinality of a finite set $X \subset \bbR$ is denoted by $\abs{X}$. Finally, we denote the reflection and complex conjugation of functions by $f^\# (t) := \overline{f(-t)}$ for $t \in \bbR$, and the duality pairing between the space of tempered distributions $\calS'(\bbR^d)$ and the Schwartz space $\calS(\bbR^d)$ by $\langle f,\psi \rangle$, where $f \in \calS'(\bbR^d)$ and $\psi \in \calS(\bbR^d)$. With this, we can define the Gabor transform of a tempered distribution $f \in \calS'(\bbR)$ via $\calG f (x,\omega) := \langle f, \operatorname{M}_\omega \operatorname{T}_x \phi\rangle$.

\section{Preliminaries}
\label{sec:basicnotions}

\subsection{The fractional Fourier transform}

The \emph{fractional Fourier transform} of a function $f \in \calS(\bbR)$ is defined by
\begin{equation*}
    \calF_\theta f(\xi) := c_\theta \rme^{\pi \rmi \xi^2 \cot \theta} \int_\bbR f(t) \rme^{\pi\rmi t^2 \cot \theta} \rme^{-2 \pi \rmi \frac{t \xi}{\sin \theta}} \dd t, \qquad \xi \in \bbR,
\end{equation*}
for $\theta \in \bbR \setminus \pi \bbZ$, where $c_\theta \in \bbC$ is the square root of $1 - \rmi \cot \theta$ with positive real part, and by $\calF_{2k \pi} f := f$ as well as $\calF_{(2k+1)\pi} f(\xi) := f(-\xi)$, for $\xi \in \bbR$, where $k \in \bbZ$. One can show that the fractional Fourier transform satisfies \cite[Theorem~2.1 on p.~406]{kerr1988namias}
\begin{equation*}
    \int_{\bbR^d} \calF_\theta f (\xi) g(\xi) \dd \xi = \int_{\bbR^d} f (t) \calF_\theta g(t) \dd t, \qquad f,g \in \calS(\bbR)
\end{equation*}
and that it maps a Schwartz function to a Schwartz function \cite[Theorem~5.3 on p.~170]{mcbride1987namias}. Therefore, we can extend the fractional Fourier transform to the tempered distributions via 
\begin{equation*}
    \langle \calF_\theta f, \psi \rangle := \langle f, \calF_\theta \psi \rangle, \qquad \psi \in \calS(\bbR),
\end{equation*}
for $f \in \calS'(\bbR)$.

The fractional Fourier transform is a powerful tool in time-frequency analysis. One of its most crucial properties is that it corresponds to a rotation of the time-frequency plane \cite{almeida1994fractional}. To describe this property, we introduce the rotation operator $\operatorname{R}_\theta : \bbR^2 \to \bbR^2$, defined by $\operatorname{R}_\theta(x,\omega) := (x \cos \theta - \omega \sin \theta, x \sin \theta + \omega \cos \theta)$, where $\theta \in \bbR$ and $x,\omega \in \bbR$.

\begin{lemma}
    \label{lem:frftandgabor}
    Let $\theta \in \bbR$ and $f \in \calS'(\bbR)$. It holds that 
    \begin{equation*}
        \calG \calF_\theta f (x,\omega) = \rme^{\pi \rmi \left(x^2 - \omega^2\right) \sin \theta \cos \theta + 2\pi\rmi x \omega \sin^2 \theta} \cdot \calG f (\operatorname{R}_{-\theta}(x,\omega)), \qquad (x,\omega) \in \bbR^2.
    \end{equation*}
\end{lemma}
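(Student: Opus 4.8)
The plan is to move the fractional Fourier transform off $f$ and onto the Gabor window by duality, and then to compute its action on the shifted and modulated Gaussian directly. Since $\operatorname{M}_\omega \operatorname{T}_x \phi \in \calS(\bbR)$, the defining relation $\langle \calF_\theta f, \psi\rangle = \langle f, \calF_\theta \psi\rangle$ applies verbatim, giving $\calG \calF_\theta f(x,\omega) = \langle \calF_\theta f, \operatorname{M}_\omega \operatorname{T}_x \phi\rangle = \langle f, \calF_\theta(\operatorname{M}_\omega \operatorname{T}_x \phi)\rangle$ for every $f \in \calS'(\bbR)$. This reduces the lemma to the purely classical identity
\[ \calF_\theta(\operatorname{M}_\omega \operatorname{T}_x \phi) = \rme^{\pi \rmi (x^2 - \omega^2)\sin\theta\cos\theta + 2\pi\rmi x\omega \sin^2\theta}\, \operatorname{M}_{\omega'} \operatorname{T}_{x'}\phi, \qquad (x',\omega') := \operatorname{R}_{-\theta}(x,\omega), \]
between two Schwartz functions; no density argument is then needed, since the scalar prefactor pulls straight through the pairing and $\langle f, \operatorname{M}_{\omega'}\operatorname{T}_{x'}\phi\rangle = \calG f(\operatorname{R}_{-\theta}(x,\omega))$.

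To establish this identity, I would first assume $\theta \in \bbR \setminus \pi\bbZ$ and insert $\operatorname{M}_\omega\operatorname{T}_x\phi(t) = 2^{1/4} \rme^{2\pi\rmi t\omega}\rme^{-\pi(t-x)^2}$ into the integral definition of $\calF_\theta$. Collecting all factors, the $t$-integrand becomes $\rme^{\pi(-1 + \rmi\cot\theta)t^2 + 2\pi(x + \rmi\omega - \rmi\xi/\sin\theta)t - \pi x^2}$ up to the constant $2^{1/4}$ and the prefactor $c_\theta \rme^{\pi\rmi \xi^2\cot\theta}$, a genuine Gaussian integral in $t$. Completing the square and using $\int_\bbR \rme^{-A t^2 + Bt}\dd t = \sqrt{\pi/A}\,\rme^{B^2/(4A)}$ with $A = \pi(1 - \rmi\cot\theta)$ produces the factor $c_\theta / \sqrt{1 - \rmi\cot\theta} = 1$ by the very choice of $c_\theta$ as the square root of $1 - \rmi\cot\theta$, which is the first clean cancellation. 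Rewriting the remaining exponent with the identity $(1 - \rmi\cot\theta)^{-1} = \sin^2\theta + \rmi\sin\theta\cos\theta$ leaves an expression that is again quadratic in $\xi$.

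The remaining work is to read off that quadratic. I expect the coefficient of $\xi^2$ to collapse to the real value $-\pi$, so that no residual chirp survives and the output is a true translated and modulated copy of $\phi$ rather than a generalised Gaussian; this cancellation, together with the branch matching above, is the main obstacle and the place where sign or square-root errors are most likely. The coefficient of $\xi$ then identifies the new centre as $x' + \rmi\omega' = x\cos\theta + \omega\sin\theta + \rmi(\omega\cos\theta - x\sin\theta)$, i.e. exactly $\operatorname{R}_{-\theta}(x,\omega)$, and the $\xi$-independent term, after adding back the $-\pi x'^2$ absorbed into $\operatorname{T}_{x'}\phi$, turns out to have vanishing real part, so the prefactor is unimodular and equals precisely the claimed phase. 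Finally I would dispatch the degenerate case $\theta \in \pi\bbZ$ by hand: there $\sin\theta = 0$ makes the phase factor trivial and $\operatorname{R}_{-\theta}$ either the identity or the map $(x,\omega) \mapsto (-x,-\omega)$, while $\calF_\theta f$ equals $f$ or $f(-\,\cdot\,)$, so the identity follows from the evenness of $\phi$ and a change of variables.
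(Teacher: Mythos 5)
Your proposal is correct, and it follows the paper's skeleton in its first move --- transferring $\calF_\theta$ onto the window via the duality pairing, so that everything reduces to the single Schwartz-function identity $\calF_\theta(\operatorname{M}_\omega \operatorname{T}_x \phi) = \rme^{\pi\rmi(x^2-\omega^2)\sin\theta\cos\theta + 2\pi\rmi x\omega\sin^2\theta}\,\operatorname{M}_{\omega'}\operatorname{T}_{x'}\phi$ with $(x',\omega') = \operatorname{R}_{-\theta}(x,\omega)$ --- but it diverges from the paper in how that identity is established. The paper cites Almeida's commutation relations $\calF_\theta\operatorname{T}_x = \rme^{\pi\rmi x^2\sin\theta\cos\theta}\operatorname{M}_{-x\sin\theta}\operatorname{T}_{x\cos\theta}\calF_\theta$ and $\calF_\theta\operatorname{M}_\omega = \rme^{-\pi\rmi\omega^2\sin\theta\cos\theta}\operatorname{M}_{\omega\cos\theta}\operatorname{T}_{\omega\sin\theta}\calF_\theta$, together with the invariance of the Gaussian under $\calF_\theta$, and then merely recombines the resulting translation and modulation operators; you instead evaluate the Gaussian integral directly. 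I checked your computation: the $t$-integrand exponent, the branch cancellation $c_\theta/\sqrt{1-\rmi\cot\theta}=1$ (valid for the principal branch since $\Re(1-\rmi\cot\theta)>0$), the identity $(1-\rmi\cot\theta)^{-1} = \sin^2\theta + \rmi\sin\theta\cos\theta$, the collapse of the $\xi^2$-coefficient to $-\pi$ after adding the prefactor $\pi\rmi\xi^2\cot\theta$, the centre $x'+\rmi\omega' = x\cos\theta+\omega\sin\theta + \rmi(\omega\cos\theta - x\sin\theta)$, and the vanishing real part of the residual constant (which equals $\pi\rmi(x^2-\omega^2)\sin\theta\cos\theta + 2\pi\rmi x\omega\sin^2\theta$ after restoring $-\pi x'^2$) are all right. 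What your route buys is self-containedness: you never invoke the cited table or the Gaussian eigenfunction property (the latter drops out as the special case $x=\omega=0$), and you make the square-root branch bookkeeping --- the most error-prone point, and one the citation hides --- fully explicit; you also dispatch $\theta\in\pi\bbZ$ by hand via evenness of $\phi$, a case the integral formula does not cover and which the paper's proof passes over silently. What the paper's route buys is brevity and insulation from computational slips, at the cost of trusting a convention match with the cited source.
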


\begin{proof}
    Let $(x,\omega) \in \bbR^2$ be arbitrary but fixed. We consider 
    \begin{equation*}
        \calG \calF_\theta f (x,\omega) = \langle \calF_\theta f , \operatorname{M}_\omega \operatorname{T}_x \phi \rangle
        = \langle f , \calF_\theta \operatorname{M}_\omega \operatorname{T}_x \phi \rangle
    \end{equation*}
    such that the lemma boils down to understanding the commutative properties of the fractional Fourier transform and the modulation and translation operators. According to \cite[Table~I on p.~3086]{almeida1994fractional}, it holds that
    \begin{gather*}
        \calF_\theta \operatorname{T}_x = \rme^{\pi \rmi x^2 \sin \theta \cos \theta} \cdot \operatorname{M}_{-x \sin \theta} \operatorname{T}_{x \cos \theta} \calF_\theta, \\
        \calF_\theta \operatorname{M}_\omega = \rme^{-\pi \rmi \omega^2 \sin \theta \cos \theta} \cdot \operatorname{M}_{\omega \cos \theta} \operatorname{T}_{\omega \sin \theta} \calF_\theta.
    \end{gather*}
    Using that the fractional Fourier transform of the Gaussian is the Gaussian, we therefore find 
    \begin{align*}
        \calF_\theta \operatorname{M}_\omega \operatorname{T}_x \phi  &= \rme^{\pi \rmi \left(x^2 - \omega^2\right) \sin \theta \cos \theta} \cdot \operatorname{M}_{\omega \cos \theta} \operatorname{T}_{\omega \sin \theta} \operatorname{M}_{-x \sin \theta} \operatorname{T}_{x \cos \theta} \phi \\
        &= \rme^{\pi \rmi \left(x^2 - \omega^2\right) \sin \theta \cos \theta + 2\pi\rmi x \omega \sin^2 \theta} \cdot \operatorname{M}_{\omega \cos \theta - x \sin \theta} \operatorname{T}_{x \cos \theta + \omega \sin \theta} \phi
    \end{align*}
    which proves the lemma.
\end{proof}

\subsection{The Paley--Wiener spaces}
\label{ssec:paley--wiener}

In the following, we will mostly work with bandlimited functions. Precisely, we consider the \emph{Paley--Wiener spaces} defined by
\[
    \mathrm{PW}^p_B := \set{ f \in \calS'(\bbR) }{ f = \calF F \mbox{ for some } F \in L^p([-B,B]) },
\]
for $B>0$ and $p \in [1,\infty]$. One may see that the Paley--Wiener spaces are nested --- $\mathrm{PW}_B^q \subset \mathrm{PW}_B^p$ for $1 \leq p \leq q \leq \infty$ --- which is due to the nestedness of $L^p$-spaces over closed intervals --- $L^q([-B,B]) \subset L^p([-B,B])$ for $1 \leq p \leq q \leq \infty$. It therefore follows that $\mathrm{PW}_B^p \subset \mathrm{PW}_B^2 \subset L^2(\bbR)$ for $p \in [2,\infty]$ since the Fourier transform is unitary on $L^2(\bbR)$. Additionally, $\mathrm{PW}_B^p \subset L^{q}(\bbR)$ for $p \in [1,2]$, where $q \in [2,\infty]$ is the H{\"o}lder conjugate of $p$, by the Hausdorff--Young inequality. Hence, the elements of Paley--Wiener spaces are $L^p$-functions. In fact, one can show that the elements of Paley--Wiener spaces extend to entire functions of exponential-type. This is one direction of the well-known Paley--Wiener the-
orem.

An important property of bandlimited functions is that one may recover them from samples. Let us call a subset $X \subset \bbR$ \emph{uniformly discrete} if there exists an $\epsilon > 0$ such that, for all $x,y \in X$ with $x \neq y$, it holds that $\abs{x-y} > \epsilon$. We say that $X$ is a \emph{set of uniqueness} for $\mathrm{PW}_B^p$ if 
\begin{equation*}
    f(x) = 0 \mbox{ for all } x \in X \implies f = 0,
\end{equation*}
for $f \in \mathrm{PW}_B^p$. Similarly, we say that $X$ is a \emph{set of sampling} for $\mathrm{PW}_B^1$ if there exists a constant $K > 0$ such that
\begin{equation*}
    \sup_{t \in \bbR} \abs{f(t)} \leq K \sup_{x \in X} \abs{f(x)}, \qquad f \in \mathrm{PW}_B^1.
\end{equation*}
Clearly, every set of sampling for $\mathrm{PW}_B^1$ is a set of uniqueness for $\mathrm{PW}_B^p$ by the nestedness of the Paley--Wiener spaces. Hence, $f \in \mathrm{PW}_B^p$ is uniquely determined by $(f(x))_{x \in X}$ if $X$ is a set of sampling for $\mathrm{PW}_B^1$. Sets of sampling for $\mathrm{PW}_B^1 \subset C_0(\bbR)$ were characterised by Beurling in a series of seminar lectures given at Princeton \cite{beurling1989works}. Specifically, Beurling introduces
\begin{equation*}
    \underline n (r) := \inf_{t \in \bbR} \abs{ X \cap [t,t+r] }, \qquad r > 0,
\end{equation*} 
along with the \emph{lower uniform density}
\begin{equation*}
    \mathrm{l.u.d.}(X) := \lim_{r \to \infty} \frac{\underline n (r)}{r}
\end{equation*}
and proves the following result.

\begin{theorem}[{\cite[Theorem~5 on p.~346]{beurling1989works}}]\label{thm:beurling_sampling}
    A uniformly discrete set $X \subset \bbR$ is a set of sampling for $\mathrm{PW}_B^1$ if and only if 
    \begin{equation*}
        \mathrm{l.u.d.}(X) > 2B.
    \end{equation*}
\end{theorem}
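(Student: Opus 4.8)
The plan is to recast the sampling inequality as a statement about zero sets, exploiting that every element of $\mathrm{PW}_B^1$ extends to an entire function of exponential type $2\pi B$ that is bounded and vanishes at infinity along $\bbR$. First I would observe that the sampling inequality fails precisely when there is a sequence $(f_n) \subset \mathrm{PW}_B^1$ with $\sup_{t \in \bbR} \abs{f_n(t)} = 1$ but $\sup_{x \in X} \abs{f_n(x)} \to 0$. Choosing $t_n$ with $\abs{f_n(t_n)} \geq \tfrac{1}{2}$ and passing to the translates $g_n := f_n(\cdot + t_n)$, one obtains a uniformly bounded family of entire functions of a fixed exponential type, which is normal by Montel's theorem; a locally uniformly convergent subsequence has a limit $g$ in the Bernstein space $\mathrm{PW}_B^\infty$ with $\abs{g(0)} \geq \tfrac{1}{2}$, so $g \neq 0$. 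After refining the subsequence, the shifted sets $X - t_n$ converge locally to a uniformly discrete set $Y$, and the uniform smallness of $g_n$ on $X - t_n$ forces $g$ to vanish on all of $Y$. The key bookkeeping step is that such weak limits of translates cannot lose points, so $\mathrm{l.u.d.}(Y) \geq \mathrm{l.u.d.}(X)$.

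With this reduction in hand, the sufficiency direction becomes a statement purely about zero sets. Assuming $\mathrm{l.u.d.}(X) > 2B$, every weak limit $Y$ inherits $\mathrm{l.u.d.}(Y) > 2B$, so it suffices to show that no nonzero bounded entire function of exponential type $2\pi B$ can vanish on a uniformly discrete set of lower uniform density exceeding $2B$. I would establish this through a Jensen-type estimate within the Cartwright class, showing that the counting function of the zeros of such a function grows at a rate of at most $2B$ per unit length, which is incompatible with vanishing on $Y$. Combined with the first paragraph, this rules out any violating sequence $(f_n)$ and yields the sampling inequality.

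For the necessity direction I would argue contrapositively: if $\mathrm{l.u.d.}(X) \leq 2B$, then there exist arbitrarily long intervals containing a deficient number of points of $X$, and from such gaps one constructs functions in $\mathrm{PW}_B^1$ that concentrate away from $X$ — for instance by forming a sine-type product over a slightly denser comparison set and correcting it — so that $\sup_{x \in X} \abs{f}$ is arbitrarily small relative to $\sup_{t \in \bbR} \abs{f}$. This contradicts the sampling inequality and shows that sampling forces $\mathrm{l.u.d.}(X) > 2B$.

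I expect the main obstacle to be the sharp density-implies-uniqueness estimate underlying the sufficiency direction, together with the delicate endpoint analysis that distinguishes $\mathrm{l.u.d.}(X) > 2B$ from $\mathrm{l.u.d.}(X) = 2B$. Controlling the zero-counting function near the critical density, and verifying that the weak limit $Y$ genuinely captures all the relevant zeros of $g$ without density loss, are precisely the points where Beurling's quantitative balayage machinery is required; by comparison, the normal-families reduction and the gap construction should be routine.
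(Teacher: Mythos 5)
This theorem is imported, not proved, in the paper: it is quoted verbatim from Beurling's collected works (Theorem~5 on p.~346), so there is no in-paper argument to compare yours against. Judged against the known proof of Beurling's sampling theorem, your outline is the standard one: a normal-families compactness argument reducing failure of the sup-norm sampling inequality to the existence of a nonzero bounded entire function of exponential type $2\pi B$ vanishing on a weak limit $Y$ of translates of $X$, the density-preservation lemma $\mathrm{l.u.d.}(Y) \geq \mathrm{l.u.d.}(X)$ for uniformly discrete sets, and a zero-density estimate in the Cartwright class for sufficiency. That skeleton is sound, with one notational caveat: under this paper's definition $\mathrm{PW}_B^\infty = \calF L^\infty([-B,B])$, the locally uniform limit $g$ need \emph{not} lie in $\mathrm{PW}_B^\infty$ (its spectrum is only a distribution supported in $[-B,B]$; think of $\sin(2\pi B x)$). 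The correct receptacle is the Bernstein space of bounded entire functions of exponential type $2\pi B$, which is all your later argument uses, so this is harmless but should be stated correctly.

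Two substantive gaps remain. First, the sufficiency crux is not a "Jensen-type estimate" in any routine sense: plain Jensen gives a zero-count bound of order $\frac{e \sigma}{\pi} r$, which is not sharp enough to separate density $2B$ from $\mathrm{l.u.d.}(Y) > 2B$; you need the full Cartwright--Levinson theory (finiteness of the logarithmic integral for functions bounded on $\bbR$, and Levinson's theorem giving asymptotic zero density exactly $\sigma/\pi = 2B$ per direction). You gesture at this, but it is the entire weight of the theorem and cannot be deferred. Second, your necessity argument genuinely fails at the endpoint: if $\mathrm{l.u.d.}(X) = 2B$ there need be no "arbitrarily long intervals containing a deficient number of points" --- the critical lattice $X = (2B)^{-1}\bbZ$ has $\mathrm{l.u.d.}(X) = 2B$ with every interval of length $r$ containing $2Br + O(1)$ points, yet it is not a set of sampling for $\mathrm{PW}_B^1$ (one builds norm-one functions concentrating near the zeros of $\sin(2\pi B \cdot)$, which is itself a weak limit obstruction but not an element of $\mathrm{PW}_B^1$). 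Handling equality requires a different mechanism than gap constructions, e.g.\ Beurling's lemma that sampling for bandwidth $B$ persists for some slightly larger bandwidth $B' > B$ (so that sampling is an open condition in $B$), or a weak-limit characterisation of sampling; as written, your contrapositive only covers $\mathrm{l.u.d.}(X) < 2B$.
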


\begin{remark}
    Notably, there are uniformly discrete sets $X$ that form a set of uniqueness for $\mathrm{PW}_B^2$ with $\mathrm{l.u.d.}(X) = 0$ \cite{koosis1960totalite}. This illustrates that $\mathrm{l.u.d.}(X) > 2B$ is sufficient but not necessary for $X$ being a set of uniqueness for $\mathrm{PW}_B^p$ in general.
\end{remark}

Finally, we note that the Gabor transform of a bandlimited function is bandlimited itself in the time variable (after modulation) and that therefore the square of the Gabor transform magnitudes is bandlimited.

\begin{lemma}\label{lem:acha_masterlockI}
    Let $p \in [1,\infty]$, $B > 0$, $\omega \in \bbR$ and $f \in \mathrm{PW}_B^p$. Then, $x \mapsto \rme^{2\pi\rmi x \omega} \calG f(x,\omega) \in \mathrm{PW}_B^p$ and therefore $x \mapsto \lvert \calG f (x,\omega) \rvert^2 \in \mathrm{PW}_{2B}^q$, where $q \in [1,\infty]$ is such that 
    \begin{equation*}
        1 + \frac{1}{q} = \frac{2}{p}.
    \end{equation*}
\end{lemma}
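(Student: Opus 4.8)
The plan is to compute the Gabor transform of $f = \calF F$, with $F \in L^p([-B,B])$, in closed form and then read off both claims. First I would insert the representation $f(t) = \int_{-B}^B F(\xi)\,\rme^{-2\pi\rmi t\xi}\dd\xi$ into the defining integral and interchange the order of integration. This interchange is justified by Fubini's theorem: since $[-B,B]$ is bounded we have $F \in L^1([-B,B])$, whence
\[
    \int_\bbR\int_{-B}^B \abs{F(\xi)}\,\rme^{-\pi(t-x)^2}\dd\xi\dd t = \norm{F}_{L^1([-B,B])}\int_\bbR \rme^{-\pi(t-x)^2}\dd t < \infty.
\]
Evaluating the inner Gaussian integral via $\int_\bbR \rme^{-\pi s^2}\rme^{-2\pi\rmi s u}\dd s = \rme^{-\pi u^2}$ (after the substitution $s = t-x$) yields
\[
    \calG f(x,\omega) = 2^{1/4}\int_{-B}^B F(\xi)\,\rme^{-2\pi\rmi x(\xi+\omega)}\rme^{-\pi(\xi+\omega)^2}\dd\xi.
\]

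From this the first claim is immediate. Multiplying by $\rme^{2\pi\rmi x\omega}$ cancels the offending factor and leaves
\[
    \rme^{2\pi\rmi x\omega}\calG f(x,\omega) = \int_{-B}^B \bigl(2^{1/4}F(\xi)\rme^{-\pi(\xi+\omega)^2}\bigr)\rme^{-2\pi\rmi x\xi}\dd\xi = \calF\bigl(2^{1/4}F(\cdot)\,\rme^{-\pi(\cdot+\omega)^2}\bigr)(x),
\]
the Fourier transform of the function $\xi \mapsto 2^{1/4}F(\xi)\rme^{-\pi(\xi+\omega)^2}$ supported on $[-B,B]$. Since $\rme^{-\pi(\cdot+\omega)^2}$ is bounded, this function lies in $L^p([-B,B])$, and therefore $x\mapsto \rme^{2\pi\rmi x\omega}\calG f(x,\omega)\in\mathrm{PW}_B^p$.

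For the second claim I would set $h(x) := \rme^{2\pi\rmi x\omega}\calG f(x,\omega) = \calF H$, where $H \in L^p([-B,B])$, so that $\abs{\calG f(x,\omega)}^2 = \abs{h(x)}^2 = h(x)\overline{h(x)}$ because the modulation factor has unit modulus. Taking Fourier transforms turns this product into a convolution, $\calF(h\overline h) = \calF h * \calF \overline h$; here $\calF h = H(-\cdot)$ (as $\calF^2$ is reflection) and $\calF \overline h = \overline{H}$, both supported on $[-B,B]$ and lying in $L^p$. A convolution of two functions supported on $[-B,B]$ is supported on $[-2B,2B]$, and by Young's inequality it lies in $L^q$ with $1 + \tfrac1q = \tfrac1p + \tfrac1p = \tfrac2p$. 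Thus $\calF(\abs{\calG f(\cdot,\omega)}^2)$ is an $L^q([-2B,2B])$ function, and undoing the transform (noting that $\calF^{-1} = \calF\circ(\cdot)(-\,\cdot)$ preserves $L^q([-2B,2B])$) places $\abs{\calG f(\cdot,\omega)}^2$ in $\mathrm{PW}_{2B}^q$, as claimed. For $p\in[2,\infty]$ one instead uses $L^p([-B,B])\subset L^2([-B,B])$, so that $L^2 * L^2 \subset L^\infty$ gives the conclusion with $q=\infty$, consistently with the relation $1+\tfrac1q = \tfrac2p$ read off in the regime $p\in[1,2]$.

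The Gaussian computation is entirely standard, so the only steps requiring care are the Fubini interchange above and the passage from product to convolution, where I must confirm that the compactly supported $L^q$ convolution genuinely represents the Fourier transform of the bounded continuous function $\abs{h}^2$ in the tempered-distribution sense. I expect this bookkeeping with the Fourier conventions and the tracking of the Young exponent $q$ to be the only real obstacle; the underlying mechanism---that forming $\abs{\calG f}^2$ doubles the bandwidth and convolves the two generating profiles---is transparent.
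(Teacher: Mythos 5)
Your proof is correct and takes essentially the same route as the paper: where you derive the identity $\rme^{2\pi\rmi x\omega}\calG f(x,\omega) = \calF\bigl(\calF^{-1}f\cdot\operatorname{T}_{-\omega}\phi\bigr)(x)$ by hand via Fubini and the Gaussian integral, the paper simply cites \cite[Lemma~3.1.1]{grochenig2001foundations}, and it then writes $\lvert\calG f(\cdot,\omega)\rvert^2 = \calF\bigl(F_\omega\ast F_\omega^\#\bigr)$ with $F_\omega^\#(t)=\overline{F_\omega(-t)}$ and applies Young's inequality and the support bound $[-2B,2B]$ exactly as you do; the distributional justification you flag is immediate when read in the paper's direction, since $F_\omega,F_\omega^\#\in L^1(\bbR)$ and the classical $L^1$ convolution theorem gives the pointwise identity. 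One point where you are in fact more careful than the paper: you observe that for $p\in(2,\infty]$ the relation $1+\tfrac1q=\tfrac2p$ has no solution $q\in[1,\infty]$ and Young's inequality does not apply in the stated form, and you patch this via $L^p([-B,B])\subset L^2([-B,B])$ (yielding $q=\infty$) --- the paper's proof glosses over this case, harmlessly, since downstream only the containment $\mathrm{PW}_{2B}^q\subset\mathrm{PW}_{2B}^1$ is used.
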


\begin{proof}
    First, we write $\calG f(x,\omega)$ in terms of the inverse Fourier transform of the signal $\calF^{-1} f$ and the Gaussian $\phi$ (cf.~\cite[Lemma~3.1.1 on p.~39]{grochenig2001foundations}):
    \begin{equation*}
        \rme^{2 \pi \rmi x \omega} \calG f(x,\omega) = \calF\left( \calF f \cdot \operatorname{T}_\omega \phi \right)(-x) = \calF\left( \calF^{-1} f \cdot \operatorname{T}_{-\omega} \phi \right)(x).
    \end{equation*}
    Next, we use the facts that $\calF^{-1} f \in L^p([-B,B])$ and $\phi \in L^\infty(\bbR)$ to see that $\calF^{-1} f \cdot \operatorname{T}_{-\omega} \phi$ is also in $L^p([-B,B])$. This implies that $x \mapsto \rme^{2 \pi \rmi x \omega} \calG f(x,\omega)$ is in the Paley-Wiener space $\mathrm{PW}_B^p$.

    We then compute the squared modulus of $\calG f(x,\omega)$ as a convolution of two functions in Fourier space: specifically, let $F_\omega := \calF^{-1} f \cdot \operatorname{T}_{-\omega} \phi$. Then, 
    \begin{equation*}
        \lvert \calG f(x,\omega) \rvert^2 = \calF F_\omega (x) \cdot \overline {\calF F_\omega (x)} = \calF F_\omega (x) \cdot \calF F_\omega^\#(x) = \calF \left(F_\omega \ast F_\omega^\# \right)(x).
    \end{equation*}
    Using Young's convolution inequality, we can show that $F \ast F_\omega^\# \in L^q(\bbR)$, where $q \in [1,\infty]$ satisfies
    \begin{equation*}
        1 + \frac{1}{q} = \frac{2}{p}.
    \end{equation*}
    Finally, we can bound the support of $F \ast F_\omega^\#$ by $[-2B,2B]$. Putting everything together, we conclude that $x \mapsto \lvert \calG f(x,\omega) \rvert^2$ is in the Paley-Wiener space $\mathrm{PW}_{2B}^q$.
\end{proof}

We have shown that $x \mapsto \abs{\calG f (x,\omega)}^2$ belongs to the Paley--Wiener space $\mathrm{PW}_{2B}^q \subset \mathrm{PW}_{2B}^1$. Therefore, we can apply Theorem~\ref{thm:beurling_sampling} to conclude that $(\abs{\calG f (x,\omega)})_{x \in X}$ completely determines $x \mapsto \abs{\calG f (x,\omega)}$, for $\omega \in \bbR$ fixed, provided that $X \subset \bbR$ is a uniformly discrete set such that $\mathrm{l.u.d.}(\Lambda) > 4B$.

\begin{remark}
    A similar result can be proven for the short-time Fourier transform with window $\psi \in \calF L^q(\bbR)$, where $q \in [1,\infty]$, by following the same proof strategy. In this way, one may arrive at a partial sampling result for short-time Fourier transform phase retrieval in which one assumes knowledge of measurements on $X \times \bbR$, where $X$ is a set of uniqueness for the appropriate Paley--Wiener space.
\end{remark}

\subsection{Zalik's theorem}
\label{sec:zalik}

For the proof of our main result, we need the following result of M{\"u}ntz--Sz{\'a}sz type due to Zalik. It asserts that certain translates of Gaussians are complete in the spaces $L^p([a,b])$ and $C([a,b])$. 

\begin{theorem}[{Zalik's theorem; \cite[Theorem~4 on p.~302]{zalik1978approximation}}]
    \label{thm:zalik_full}
    Let $p \in [1,\infty)$, $-\infty < a < b < \infty$, $c > 0$, and let $\Omega \subset \bbR$ be a countable set. Then, 
    \begin{equation*}
        \set{t \mapsto \rme^{-c (t-\omega)^2}}{ \omega \in \Omega }
    \end{equation*}
    is complete in $L^p([a,b])$ and $C([a,b])$ if and only if 
    \begin{equation}\label{eq:zaliks_condition}
        \sum_{\omega \in \Omega \setminus \{0\}} \abs{\omega}^{-1}
    \end{equation}
    diverges.
\end{theorem}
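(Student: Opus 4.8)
The plan is to strip away the Gaussian factor and reduce everything to a completeness question for real exponentials, which I can then convert into a uniqueness problem for an entire function of exponential type governed by a half-plane Blaschke condition. First I would write $\rme^{-c(t-\omega)^2} = \rme^{-c\omega^2}\,\rme^{-ct^2}\,\rme^{2c\omega t}$ and observe that multiplication by $\rme^{-ct^2}$ is a bounded, boundedly invertible operator on both $L^p([a,b])$ and $C([a,b])$, since $\rme^{\pm ct^2}$ are bounded on the compact interval. Hence the closed span of the Gaussians is the whole space if and only if the closed span of $\{\rme^{2c\omega t}\}_{\omega\in\Omega}$ is, the nonzero scalars $\rme^{-c\omega^2}$ being irrelevant. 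Since completeness of $\{\rme^{2c\omega t}\}$ is moreover invariant under translating the interval (translation multiplies each exponential by a nonzero constant), I may assume $[a,b]=[-L,L]$ with $L=(b-a)/2$.

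Next I would dualize. By Hahn--Banach the system is complete iff the only annihilating functional is $0$; for $p\in[1,\infty)$ this functional is integration against some $\psi\in L^{p'}([-L,L])$, and for $C([-L,L])$ against a finite Borel measure. In either case I obtain a finite measure $\mu$ on $[-L,L]$ with $\int_{-L}^{L}\rme^{2c\omega t}\,\mathrm{d}\mu(t)=0$ for all $\omega\in\Omega$, and I form the entire function $F(z):=\int_{-L}^{L}\rme^{zt}\,\mathrm{d}\mu(t)$. This $F$ has exponential type $\le L$, is bounded on the imaginary axis (being the Fourier transform of a finite measure), vanishes at every point $2c\omega$, and satisfies $F\equiv 0$ iff $\mu=0$ by injectivity of the Fourier--Laplace transform. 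The whole problem is thereby recast as: such a nonzero $F$ exists precisely when $\sum_{\omega\ne0}\abs{\omega}^{-1}$ converges.

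For the direction \emph{divergence $\Rightarrow$ completeness}, I would argue contrapositively. If $F\not\equiv 0$, then, being of exponential type and bounded on the imaginary axis, $F$ is of bounded type (Nevanlinna class) in each of the half-planes $\{\Re z>0\}$ and $\{\Re z<0\}$ by a Phragmén--Lindelöf / Cartwright-class argument. Its zeros in these half-planes must then satisfy the half-plane Blaschke condition, which for the zeros $2c\omega$ reads $\sum_{\omega>0}\frac{2c\omega}{1+4c^2\omega^2}<\infty$ together with the analogous sum over $\omega<0$, i.e.\ $\sum_{\omega\ne0}\abs{\omega}^{-1}<\infty$. Thus if the series diverges then $F\equiv 0$, so $\mu=0$ and the system is complete (a single zero at $\omega=0$ contributes a harmless factor $z$ and does not affect the series).

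The hard part will be the converse, \emph{convergence $\Rightarrow$ incompleteness}, where I must actually produce a nonzero annihilator, equivalently a nonzero entire $F$ of exponential type $<L$ that decays on the imaginary axis enough to be written as $\int_{-L}^{L}\rme^{zt}\psi(t)\,\mathrm{d}t$ with $\psi$ in the correct class and $F(2c\omega)=0$. The naive attempt of multiplying a fixed band-limited function by the canonical product $\prod_\omega\bigl(1-\tfrac{z}{2c\omega}\bigr)$ fails: although that product has exponential type $0$, it can grow faster than any polynomial on the imaginary axis and so destroys the decay needed for Paley--Wiener. Instead I would exploit the Blaschke condition in the forward direction: since $\{2c\omega\}$ now satisfies the half-plane Blaschke condition, I can assemble an entire function of exponential type $<L$, bounded on the imaginary axis, with exactly these zeros (via half-plane Blaschke products together with an $\rme^{\sigma z}$-type factor restoring entireness and controlling the type), extract $\psi$ supported in $[-L,L]$ by Paley--Wiener, and finally mollify so that $\psi\in C^\infty\subset L^{p'}([-L,L])$ for every $p$ and also defines a finite measure, handling the $L^p$ and $C$ cases at once. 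Checking that this $\psi$ is genuinely nonzero and lands in the correct dual class uniformly in $p$ is the crux of the argument, and is exactly where Zalik's original reasoning, together with the minor modification needed here, must be invoked.
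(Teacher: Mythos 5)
Your reduction to the exponential system $\{\rme^{2c\omega t}\}_{\omega\in\Omega}$ (multiplication by $\rme^{\pm ct^2}$ being an isomorphism of $L^p([a,b])$ and of $C([a,b])$) is sound, and your ``divergence $\Rightarrow$ completeness'' direction is essentially correct, with one caveat: the step ``the half-plane Blaschke condition reads $\sum_{\omega>0}\frac{2c\omega}{1+4c^2\omega^2}<\infty$, i.e.\ $\sum_{\omega\neq 0}\abs{\omega}^{-1}<\infty$'' is not an equivalence when $\Omega$ accumulates at a finite point (take $\omega_k=2^{-k}$: the Blaschke sum converges while $\sum\abs{\omega_k}^{-1}$ diverges). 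Since the theorem allows arbitrary countable $\Omega\subset\bbR$, you must dispatch that case separately --- easily, since zeros of a nonzero entire $F$ cannot accumulate at a finite point, so $F\equiv 0$ by the identity theorem --- but it needs saying.

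The genuine gap is the converse, and it is not merely unfinished but misdesigned as sketched. You correctly diagnose that the canonical product over the real zeros $\{2c\omega\}$, though of exponential type zero, may grow like $\rme^{o(\abs{y})}$ (superpolynomially) on the imaginary axis; but your proposed repair --- ``half-plane Blaschke products together with an $\rme^{\sigma z}$-type factor restoring entireness'' --- cannot work: a Blaschke product for $\{\Re z>0\}$ is analytic only in that half-plane and has poles at the reflected points, and no exponential factor glues two such products into an entire function of exponential type $<L$. What is actually needed is a multiplier: a nonzero entire function of arbitrarily small exponential type whose decay on the imaginary axis dominates the $\rme^{o(\abs{y})}$ growth of the product, so that Paley--Wiener applies. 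Producing such multipliers is the real content of the incompleteness half (it is of Beurling--Malliavin flavour, and is exactly what the M{\"u}ntz--Sz{\'a}sz/Luxemburg--Korevaar machinery supplies); Paley--Wiener plus mollification only enter afterwards, and your text explicitly defers this step to ``Zalik's original reasoning.'' The paper avoids the entire complex-analytic apparatus: the substitution $x=\rme^{2ct}$ converts $\rme^{-c(t-\omega)^2}$, up to the harmless factor $\rme^{-ct^2}$ and nonzero constants, into the power $x^\omega$ on $[\rme^{2ca},\rme^{2cb}]$, and \emph{both} implications are then quoted from the full M{\"u}ntz theorem of Luxemburg--Korevaar on an interval bounded away from the origin (with the $L^p$ case handled by a small modification of Zalik's original proof). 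So your forward direction is an acceptable alternative route, but as written the proposal does not prove the ``only if'' half, and hence does not prove the theorem.
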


\begin{proof}
    The result for $L^p([a,b])$ follows from a small modification of the original proof in \cite{zalik1978approximation}. For $C([a,b])$ consider the following argument which is also inspired by the original proof: suppose that the sum in equation~\eqref{eq:zaliks_condition} diverges, and let $f \in \calC([a,b])$ as well as $\epsilon > 0$ be arbitrary but fixed. Define
    \begin{equation*}
        g(x) := f\left( \frac{\log x}{2 c} \right) \rme^{\frac{\log^2 x}{4 c}}, \qquad x \in \left[\rme^{2ca},\rme^{2cb}\right],
    \end{equation*}
    and note that $g \in C([\rme^{2ca},\rme^{2cb}])$. According to \cite[Theorem~6.1~on p.~30]{luxemburg1971entire}, $\{ x \mapsto x^{\omega} \,|\, \omega \in \Omega \}$ is complete in $C([\rme^{2ca},\rme^{2cb}])$. Therefore, there exist $\Omega_0 \subset \Omega$ finite and $(\lambda_\omega)_{\omega \in \Omega_0} \in \bbC$ such that 
    \begin{equation*}
        \sup_{x \in [\rme^{2ca},\rme^{2cb}]} \abs{ \sum_{\omega \in \Omega_0} \lambda_\omega x^\omega - g(x) } < \epsilon. 
    \end{equation*}
    By the change of variable $x = \rme^{2 c t}$, it follows that 
    \begin{equation*}
        \sup_{t \in [a,b]} \abs{ \sum_{\omega \in \Omega_0} \lambda_\omega \rme^{2 c \omega t} - f(t) \rme^{c t^2} } < \epsilon
    \end{equation*}
    and therefore 
    \begin{equation*}
        \sup_{t \in [a,b]} \abs{ \sum_{\omega \in \Omega_0} \lambda_\omega \rme^{c\omega^2} \cdot \rme^{- c(t - \omega)^2} - f(t) } \leq \sup_{t \in [a,b]} \abs{ \sum_{\omega \in \Omega_0} \lambda_\omega \rme^{2 c \omega t} - f(t) \rme^{c t^2} } < \epsilon
    \end{equation*}
    showing that $\{t \mapsto \rme^{-c (t-\omega)^2} \,|\, \omega \in \Omega \}$ is complete in $C([a,b])$.
    
    Now, suppose that $\{t \mapsto \rme^{-c (t-\omega)^2} \,|\, \omega \in \Omega\}$ is complete in $C([a,b])$. Then, a modification of the argument above shows that $\{ x \mapsto x^{\omega} \,|\, \omega \in \Omega \}$ is complete in $C([\rme^{2ca},\rme^{2cb}])$. Therefore, \cite[Theorem~6.1~on p.~30]{luxemburg1971entire} implies that the sum in equation~\eqref{eq:zaliks_condition} diverges.
\end{proof}

\begin{remark}
    By the same proof, we can extend Zalik's theorem to a countable subset $\Omega$ of $\bbC$ under the condition that there exists a $\delta > 0$ and a finite subset $\Omega_0 \subset \Omega$ such that
    \begin{equation*}
        \abs{\Re\left(\omega - \frac12\right)} \geq \delta \abs{ \omega - \frac12 }, \qquad \omega \in \Omega \setminus \Omega_0.
    \end{equation*}
\end{remark}

\section{Main results}
\label{sec:mainresults}

After laying out the groundwork, we present our main theorem, which generalises a result previously established by Grohs and Liehr for $\mathrm{PW}_B^4$ \cite{grohs2023injectivity}. Specifically, we extend their result to hold for the entire family of Paley--Wiener spaces $\mathrm{PW}_B^p$ for $p \in [1, \infty]$. This encompasses the case $p=2$ which is commonly studied in signal processing, as well as the challenging case $p=1$. Our result shows that for sufficiently dense uniformly discrete sets $X$ and certain countable sets $\Omega$, functions in $\mathrm{PW}_B^p$ are uniquely determined (up to global phase) from their Gabor magnitudes on $X \times \Omega$.

\begin{theorem}[Main result]\label{thm:mainthm}
    Let $p \in [1,\infty]$ and $B > 0$. Let $X \subset \bbR$ be a uniformly discrete set with $l.u.d.(X) > 4B$ and let $\Omega \subset \bbR$ be a countable set such that $\sum_{\omega \in \Omega \setminus \{0\}} \abs{\omega}^{-1}$ diverges. Then, the following are equivalent for $f,g \in \mathrm{PW}_B^p$:
    \begin{enumerate}
        \item $f = \rme^{\rmi \alpha} g$ for some $\alpha \in \bbR$,
        \item $\abs{\calG f} = \abs{\calG g}$ on $X \times \Omega$.
    \end{enumerate}
\end{theorem}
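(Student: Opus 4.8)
The plan is to prove the nontrivial implication (2) $\Rightarrow$ (1); the converse is immediate, since $\abs{\calG(\rme^{\rmi\alpha}g)} = \abs{\calG g}$ everywhere. The argument follows the strategy of Grohs and Liehr and proceeds by two successive densification steps — first in the time variable $x$ via Beurling's sampling theorem, then in the frequency variable $\omega$ via Zalik's theorem — followed by a short algebraic step turning a separable identity into equality up to a global phase. Throughout I write $F := \calF^{-1} f$ and $G := \calF^{-1} g$, which lie in $L^p([-B,B]) \subset L^1([-B,B])$.

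First I would upgrade the hypothesis from $X \times \Omega$ to $\bbR \times \Omega$. Fix $\omega \in \Omega$. By Lemma~\ref{lem:acha_masterlockI}, both $x \mapsto \abs{\calG f(x,\omega)}^2$ and $x \mapsto \abs{\calG g(x,\omega)}^2$ belong to $\mathrm{PW}_{2B}^1$, so their difference also lies in $\mathrm{PW}_{2B}^1$ and vanishes on $X$. Since $\mathrm{l.u.d.}(X) > 4B = 2 \cdot 2B$, Theorem~\ref{thm:beurling_sampling} makes $X$ a set of sampling, hence a set of uniqueness, for $\mathrm{PW}_{2B}^1$; therefore the difference vanishes identically. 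Thus $\abs{\calG f(x,\omega)} = \abs{\calG g(x,\omega)}$ for \emph{all} $x \in \bbR$ and every $\omega \in \Omega$.

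Next I would pass to the autocorrelation side. By the identity in Lemma~\ref{lem:acha_masterlockI} one has $\abs{\calG f(x,\omega)}^2 = \calF(F_\omega \ast F_\omega^\#)(x)$ with $F_\omega = F \cdot \operatorname{T}_{-\omega}\phi$, and similarly for $g$; equality for all $x$ forces $F_\omega \ast F_\omega^\# = G_\omega \ast G_\omega^\#$ for every $\omega \in \Omega$. Expanding the convolution pointwise and pulling out the $\omega$-independent factor $\sqrt{2}\,\rme^{-\pi s^2/2}$ arising from $\operatorname{T}_{-\omega}\phi(t)\,\overline{\operatorname{T}_{-\omega}\phi(t-s)}$, this reads, for a.e. $s$,
\begin{equation*}
    \int_\bbR \chi_s(t)\, \rme^{-2\pi(t-(s/2-\omega))^2} \dd t = 0, \qquad \omega \in \Omega,
\end{equation*}
where $\chi_s(t) := F(t)\overline{F(t-s)} - G(t)\overline{G(t-s)}$. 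A Fubini argument based on $F,G \in L^1([-B,B])$ shows that $\chi_s \in L^1$ with compact support for a.e. $s$. Since $\sum_{\omega \in \Omega \setminus \{0\}} \abs{\omega}^{-1}$ diverges and translating the Gaussian centres by $s/2$ preserves this divergence, Theorem~\ref{thm:zalik_full} guarantees that $\{t \mapsto \rme^{-2\pi(t-(s/2-\omega))^2}\}_{\omega \in \Omega}$ is complete in $C$ of the support interval; as $\chi_s \in L^1$ annihilates a complete set, $\chi_s = 0$. Hence $F(t)\overline{F(t')} = G(t)\overline{G(t')}$ for a.e. $(t,t')$. Taking $t = t'$ gives $\abs{F} = \abs{G}$ a.e., so $F$ and $G$ share the same essential support $E$; on $E$ the ratio $u := F/G$ is unimodular and satisfies $u(t)\overline{u(t')} = 1$, which forces $u$ to equal a constant $\rme^{\rmi\alpha}$ a.e. Thus $F = \rme^{\rmi\alpha}G$, and therefore $f = \rme^{\rmi\alpha}g$.

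The main obstacle I anticipate is not the logical skeleton but the integrability bookkeeping needed to keep everything uniform over $p \in [1,\infty]$: one must check that each object remains in a space where Beurling's and Zalik's theorems apply — in particular that $\chi_s \in L^1$ for a.e. $s$ even though $F,G$ are only $L^p$, and that the frequency shift by $s/2$ does not spoil Zalik's divergence condition. The case $p = 1$ is the delicate one, because there $\abs{\calG f(\cdot,\omega)}^2$ sits in $\mathrm{PW}_{2B}^1$ with no integrability to spare, which is precisely why Beurling's theorem must be invoked at the $L^1$ level rather than in an $L^2$ setting.
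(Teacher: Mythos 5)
Your proposal follows essentially the same route as the paper's proof: Beurling's theorem applied to the $\mathrm{PW}_{2B}^1$ functions $x \mapsto \abs{\calG f(x,\omega)}^2$ to upgrade $X$ to all of $\bbR$, equality of the autocorrelations $F_\omega \ast F_\omega^\#$, completing the square to extract the $\omega$-independent Gaussian factor, Zalik's theorem applied to the shifted centre set $\tfrac{s}{2} - \Omega$ (the paper's $\Omega^\xi$), and a classical ambiguity-function argument to finish. Your one real deviation is cosmetic: where the paper approximates the characters $\eta \mapsto \rme^{2\pi\rmi \eta t}$ by finite Gaussian combinations to conclude $\calF^{-1} H_\xi = 0$, you invoke duality directly (an $L^1$ density annihilating a family complete in $C([-B,B])$ must vanish, by Riesz representation); these are equivalent.

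One step, however, needs repair. At the end you ``take $t = t'$'' in the identity $F(t)\overline{F(t')} = G(t)\overline{G(t')}$, but this identity holds only for almost every $(t,t') \in \bbR^2$, and the diagonal is a Lebesgue-null set of the plane, so the identity cannot be restricted to it; $\abs{F} = \abs{G}$ a.e.\ does not follow by this move, and the same objection applies to your pointwise use of $u(t)\overline{u(t')} = 1$ to force constancy of $u$. The standard fix is a slice argument, which is in substance what the paper does: by Fubini, for a.e.\ fixed $t'$ the identity holds for a.e.\ $t$; if $G = 0$ a.e.\ then $F(t)\overline{F(t')} = 0$ for a.e.\ $(t,t')$ forces $F = 0$ a.e.; otherwise choose a good $t'$ with $G(t') \neq 0$, note $F(t') \neq 0$ (else $G \equiv 0$), and obtain $F = \tau G$ a.e.\ with $\tau = \overline{G(t')}/\overline{F(t')}$, after which substituting back into the identity yields $\abs{\tau}^2 = 1$. (The paper's variant: fix a good $\eta$, Fourier transform the slice identity in $\xi$ to get $\calF^{-1}f(\eta)\,\overline{f(t)} = \calF^{-1}g(\eta)\,\overline{g(t)}$ for \emph{all} $t$, then evaluate at a point $t_0$ with $f(t_0) \neq 0$.) With this replacement your essential-support and unimodular-ratio discussion becomes unnecessary. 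The remaining bookkeeping you flag is handled correctly: the Fubini argument giving $\chi_s \in L^1([-B,B])$ for a.e.\ $s$ matches the paper, and the divergence of $\sum \abs{\omega}^{-1}$ is indeed preserved under the reflection-plus-translation $\omega \mapsto \tfrac{s}{2} - \omega$, exactly as the paper asserts for $\Omega^\xi$.
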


\begin{proof}[Proof of Theorem~\ref{thm:mainthm}]
    The following arguments are inspired by \cite{grohs2023injectivity}. It is obvious that item~1 implies item~2. Therefore, we assume that $f,g \in \mathrm{PW}_B^p$ are such that $\abs{\calG f} = \abs{\calG g}$ on $X \times \Omega$. Let us now fix an arbitrary $\omega \in \Omega$ and note that Lemma~\ref{lem:acha_masterlockI} implies that $x \mapsto \lvert \calG f(x,\omega) \rvert^2, x \mapsto \lvert \calG g(x,\omega) \rvert^2 \in \mathrm{PW}_{2B}^1$. It follows by Theorem~\ref{thm:beurling_sampling} that
    \begin{equation*}
        \abs{\calG f(x,\omega)} = \abs{\calG g(x,\omega)}, \qquad x \in \bbR.
    \end{equation*}

    Following the argument in the proof of Lemma~\ref{lem:acha_masterlockI}, it is readily shown that 
    \begin{equation*}
        \left( \calF^{-1} f \cdot \operatorname{T}_{-\omega} \phi \right) \ast \left(\calF^{-1} f \cdot \operatorname{T}_{-\omega} \phi \right)^\# = \left( \calF^{-1} g \cdot \operatorname{T}_{-\omega} \phi \right) \ast \left(\calF^{-1} g \cdot \operatorname{T}_{-\omega} \phi \right)^\#
    \end{equation*}
    in $L^1([-2B,2B])$. We reformulate the above to 
    \begin{multline*}
        \int_{-B}^B \calF^{-1} f (\eta) \overline{ \calF^{-1} f (\eta - \xi) } \phi(\eta + \omega) \phi(\eta- \xi + \omega) \dd \eta \\= \int_{-B}^B \calF^{-1} g (\eta) \overline{ \calF^{-1} g (\eta - \xi) } \phi(\eta + \omega) \phi(\eta- \xi + \omega) \dd \eta
    \end{multline*}
    for almost all $\xi \in \bbR$. By completing the square, we see that 
    \begin{equation*}
        \phi(\eta + \omega) \phi(\eta- \xi + \omega) = \sqrt{2} \cdot \rme^{-2\pi(\eta + \omega - \xi/2)^2} \cdot \rme^{-\frac{\pi \xi^2}{2}}
    \end{equation*}
    and therefore 
    \begin{equation}\label{eq:hebel_zalik}
        \int_{-B}^B \left( \calF^{-1} f (\eta) \overline{ \calF^{-1} f (\eta - \xi) } - \calF^{-1} g (\eta) \overline{ \calF^{-1} g (\eta - \xi) } \right) \rme^{-2\pi(\eta + \omega - \xi/2)^2} \dd \eta = 0,
    \end{equation}
    for almost all $\xi \in \bbR$. According to Fubini's theorem,
    \begin{equation*}
        \eta \mapsto H_\xi(\eta) := \calF^{-1} f (\eta) \overline{ \calF^{-1} f (\eta - \xi) } - \calF^{-1} g (\eta) \overline{ \calF^{-1} g (\eta - \xi)} \in L^1([-B,B])
    \end{equation*} 
    for almost all $\xi \in \bbR$. Let us now fix an arbitrary $\xi \in \bbR$ such that the above two assertions hold and consider the set $\Omega^\xi := \tfrac{\xi}{2} - \Omega$. Then, $\Omega^\xi \subset \bbR$ is countable and 
    \begin{equation*}
        \sum_{\omega \in \Omega^\xi \setminus \{0\}} \abs{\omega}^{-1}
    \end{equation*}
    diverges. Therefore, Theorem~\ref{thm:zalik_full} implies that the set
    \begin{equation*}
        \set{\eta \mapsto \rme^{-2\pi (\eta-\omega)^2}}{ \omega \in \Omega^\xi }
    \end{equation*}
    is complete in $C([-B,B])$. It follows that, for all $t \in \bbR$ and all $\epsilon > 0$, there exist a finite set $\Omega_0^\xi \subset \Omega^\xi$ and a sequence $(\lambda_\omega^\xi)_{\omega \in \Omega_0^\xi} \in \bbC$ such that 
    \begin{equation*}
        \sup_{\eta \in [-B,B]} \abs{ \sum_{\omega \in \Omega_0^\xi} \lambda_\omega^\xi \rme^{-2\pi (\eta-\omega)^2} - \rme^{2\pi\rmi \eta t} } < \epsilon.
    \end{equation*}
    According to equation~\eqref{eq:hebel_zalik}, we have 
    \begin{align*}
        \abs{\int_{-B}^B H_\xi(\eta) \rme^{2 \pi \rmi \eta t} \dd \eta} &\leq \int_{-B}^B \abs{ H_\xi(\eta) } \abs{ \sum_{\omega \in \Omega_0^\xi} \lambda_\omega^\xi \rme^{-2\pi (\eta-\omega)^2} - \rme^{2\pi\rmi \eta t} } \dd \eta \\
        &\leq \epsilon \cdot \norm{H_\xi}_1.
    \end{align*}
    Since the above holds for all $\epsilon > 0$, we conclude 
    \begin{equation*}
        \calF^{-1} H_\xi (t) = \int_{-B}^B H_\xi(\eta) \rme^{2 \pi \rmi \eta t} \dd \eta = 0, \qquad t \in \bbR.
    \end{equation*}
    Therefore, $H_\xi = 0$ in $L^1(\bbR)$ which implies that 
    \begin{equation}\label{eq:almost_done}
        \calF^{-1} f (\eta) \overline{ \calF^{-1} f (\eta - \xi) } = \calF^{-1} g (\eta) \overline{ \calF^{-1} g (\eta - \xi) }
    \end{equation}
    for almost all $\eta \in \bbR$.
    
    The rest of the proof is classical. We present the following argument inspired by \cite[Theorem~2.5 on p.~588]{auslander1985radar} for the convenience of the reader: another application of Fubini's theorem implies that 
    \begin{equation*}
        \xi \mapsto \calF^{-1} f (\eta) \overline{ \calF^{-1} f (\eta - \xi) } \in L^1(\bbR)
    \end{equation*}
    for almost all $\eta \in \bbR$ (and the same is true for $g$). If we fix such an $\eta$, then we can apply the Fourier transform to equation~\eqref{eq:almost_done} and obtain 
    \begin{equation*}
        \calF^{-1} f (\eta) \overline{ f(t) } = \calF^{-1} g (\eta) \overline{ g(t) }, \qquad t \in \bbR. 
    \end{equation*}
    If $f = 0$, then $\calF^{-1} g (\eta) \overline{ g(t) } = 0$ such that either $g = 0$ or $\calF^{-1} g = 0$ almost everywhere in which case $g = 0$. Hence, the theorem is proven if $f = 0$. So let us assume that $f \neq 0$. Then, there exists $t_0 \in \bbR$ such that $f(t_0) \neq 0$. Therefore, 
    \begin{equation*}
        \calF^{-1} f = \tau \cdot \calF^{-1} g, \qquad \tau := \frac{\overline{ g(t_0) }}{\overline{f(t_0)}}
    \end{equation*}
    in $L^1([-B,B])$ and thus $f = \tau g$. This proves the theorem since it implies $\abs{f(t_0)} = \abs{g(t_0)}$ which shows that $\abs{\tau} = 1$. 
\end{proof}

We can use the fractional Fourier transform to rotate our result in the time-frequency plane. To this end, we introduce the spaces 
\[
    \calF_\theta L^p([-B,B]) := \set{ f \in \calS'(\bbR) }{ f = \calF_\theta F \mbox{ for some } F \in L^p([-B,B]) },
\]
for $B>0$ and $p \in [1,\infty]$. These spaces are nested similarly to the Paley-Wiener spaces, with $\calF_\theta L^q([-B,B]) \subset \calF_\theta L^p([-B,B])$ for $1 \leq p \leq q \leq \infty$. Thus, for $p\geq 2$, we have $\calF_\theta L^p([-B,B])\subset \calF_\theta L^2([-B,B])\subset L^2(\bbR)$ since the fractional Fourier transform is unitary on $L^2(\bbR)$. Moreover, for $1 \leq p \leq 2$, we have $\calF_\theta L^p([-B,B])\subset L^q(\bbR)$, where $q$ is the H{\"o}lder conjugate of $p$, since the Hausdorff-Young inequality extends to the fractional Fourier transform \cite[Theorem~4.2 on p.~88]{chen2021fractional}.

With these spaces in hand, we can now state and prove a generalisation of our main result.

\begin{theorem}[Generalised main result]\label{thm:generalised_main_thm}
    Let $p \in [1,\infty]$, $B > 0$ and $\theta \in \bbR$. Let $X \subset \bbR$ be a set of uniqueness for $\mathrm{PW}_{2B}^1$ and let $\Omega \subset \bbR$ be a countable set such that $\sum_{\omega \in \Omega \setminus \{0\}} \abs{\omega}^{-1}$ diverges. Then, the following are equivalent for $f,g \in \calF_\theta L^p([-B,B])$:
    \begin{enumerate}
        \item $f = \rme^{\rmi \alpha} g$ for some $\alpha \in \bbR$,
        \item $\abs{\calG f} = \abs{\calG g}$ on $\operatorname{R}_\theta(\Omega \times X)$.
    \end{enumerate}
\end{theorem}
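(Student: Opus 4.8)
The plan is to reduce the statement to the already-established Theorem~\ref{thm:mainthm} on the Paley--Wiener spaces by ``unrotating'' the time-frequency plane with the fractional Fourier transform. First I would write $f = \calF_\theta F$ and $g = \calF_\theta G$ with $F,G \in L^p([-B,B])$, and set $\psi := \theta - \tfrac{\pi}{2}$. Since $\calF_{\pi/2} = \calF$, the functions $h := \calF F$ and $k := \calF G$ lie in $\mathrm{PW}_B^p$ by definition, and the index-additivity of the fractional Fourier transform, $\calF_\alpha \calF_\beta = \calF_{\alpha+\beta}$ (valid on $\calS'(\bbR)$ by duality from the Schwartz-space identity), gives $f = \calF_\psi h$ and $g = \calF_\psi k$. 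Thus every element of $\calF_\theta L^p([-B,B])$ is $\calF_\psi$ applied to an element of $\mathrm{PW}_B^p$.

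Next I would invoke Lemma~\ref{lem:frftandgabor} with angle $\psi$. Because the prefactor appearing there is unimodular, it yields $\abs{\calG f(x,\omega)} = \abs{\calG h(\operatorname{R}_{-\psi}(x,\omega))}$ and $\abs{\calG g(x,\omega)} = \abs{\calG k(\operatorname{R}_{-\psi}(x,\omega))}$ for all $(x,\omega) \in \bbR^2$. Hence the hypothesis $\abs{\calG f} = \abs{\calG g}$ on $\operatorname{R}_\theta(\Omega \times X)$ is equivalent to $\abs{\calG h} = \abs{\calG k}$ on $\operatorname{R}_{-\psi}(\operatorname{R}_\theta(\Omega\times X))$. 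Composing rotations, $\operatorname{R}_{-\psi}\operatorname{R}_\theta = \operatorname{R}_{-\psi+\theta} = \operatorname{R}_{\pi/2}$, and since $\operatorname{R}_{\pi/2}(a,b) = (-b,a)$ this set is exactly $(-X)\times\Omega$. In other words, $h,k \in \mathrm{PW}_B^p$ satisfy $\abs{\calG h} = \abs{\calG k}$ on $(-X)\times\Omega$; note that the ordering $\Omega\times X$ in the statement is precisely what makes the sampling coordinate land in $-X$ and the modulation coordinate in $\Omega$ after this rotation.

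Finally I would apply Theorem~\ref{thm:mainthm} to $h$ and $k$ with sampling set $(-X)\times\Omega$. Two small points must be checked: that $-X$ is a set of uniqueness for $\mathrm{PW}_{2B}^1$, which follows from the hypothesis on $X$ because $\mathrm{PW}_{2B}^1$ is invariant under the reflection $t\mapsto -t$; and that the conclusion of Theorem~\ref{thm:mainthm} holds under this weaker uniqueness hypothesis in place of $\mathrm{l.u.d.}(X)>4B$. The latter is the case because the density assumption enters the proof of Theorem~\ref{thm:mainthm} only through Theorem~\ref{thm:beurling_sampling}, and only to guarantee that the band-limited functions $x\mapsto\abs{\calG h(x,\omega)}^2 \in \mathrm{PW}_{2B}^1$ are determined by their samples on $X$, i.e.\ only through $X$ being a set of uniqueness for $\mathrm{PW}_{2B}^1$. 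Theorem~\ref{thm:mainthm} then gives $h = \rme^{\rmi\alpha}k$ for some $\alpha \in \bbR$, and applying the linear map $\calF_\psi$ returns $f = \calF_\psi h = \rme^{\rmi\alpha}\calF_\psi k = \rme^{\rmi\alpha}g$; the reverse implication is immediate since a global phase does not affect $\abs{\calG\cdot}$. I expect the only genuine obstacle to be the careful justification of the index-additivity of $\calF_\theta$ on tempered distributions together with the bookkeeping of the rotation composition, since everything else is a direct transfer through Lemma~\ref{lem:frftandgabor} and the reflection-invariance of uniqueness sets.
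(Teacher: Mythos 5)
Your proof is correct and takes essentially the same route as the paper's: unrotate the time--frequency plane with the fractional Fourier transform via Lemma~\ref{lem:frftandgabor} and reduce to Theorem~\ref{thm:mainthm} (the paper applies the lemma twice, with angles $-\theta$ and $\pi/2$ through the decomposition $f = \calF \calF_{-\theta} f$ composed with Fourier inversion, rather than once with $\psi = \theta - \tfrac{\pi}{2}$ via index-additivity, a cosmetic difference). You are in fact more meticulous than the paper on two points it elides: the $\pi/2$-rotation really lands on $(-X) \times \Omega$ rather than $X \times \Omega$, which you correctly repair through reflection-invariance of uniqueness sets, and Theorem~\ref{thm:mainthm} must be applied under the weaker set-of-uniqueness hypothesis of the present statement, which is legitimate exactly for the reason you give --- its proof uses the density assumption only through Theorem~\ref{thm:beurling_sampling} to conclude that the $\mathrm{PW}_{2B}^1$ functions $x \mapsto \abs{\calG f(x,\omega)}^2 - \abs{\calG g(x,\omega)}^2$ vanishing on $X$ vanish identically.
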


\begin{proof}
    Clearly, item~1 implies item~2. We will therefore assume that $\abs{\calG f} = \abs{\calG g}$ on $\operatorname{R}_\theta(\Omega \times X)$. According to Lemma~\ref{lem:frftandgabor}, we have $\abs{\calG \calF_{-\theta} f} = \abs{\calG \calF_{-\theta} g}$ on $\Omega \times X$. Note that $\calF_{-\theta} f, \calF_{-\theta} g \in L^p([-B,B])$ such that $\calF \calF_{-\theta} f, \calF \calF_{-\theta} g \in \mathrm{PW}_B^p$. Another application of Lemma~~\ref{lem:frftandgabor} yields $\abs{\calG \calF \calF_{-\theta} f} = \abs{\calG \calF \calF_{-\theta} g}$ on $X \times \Omega$. Therefore, Theorem~\ref{thm:mainthm} implies that $\calF \calF_{-\theta} f = \rme^{\rmi \alpha} \cdot \calF \calF_{-\theta} g$ for some $\alpha \in \bbR$. Finally, $f = \rme^{\rmi \alpha} g$ follows from Fourier inversion.
\end{proof}

\paragraph{Acknowledgements} The author would like to thank Rima Alaifari for her comments which have improved the presentation of the paper as well as the first reviewer whose remarks have inspired a generalisation of the main results. Additionally, the author acknowledges funding through the SNSF Grant 200021\_184698.

\paragraph{Declaration of generative AI and AI-assisted technologies in the writing process}

During the review process for this work, the author used ChatGPT to correct punctuation and orthography. After using this tool, the author reviewed and edited the content as needed and takes full responsibility for the content of the publication.

\bibliographystyle{plain}
\bibliography{sources} 

\end{document}